\newtheorem{theorem}{Theorem}
\newtheorem{lemma}[theorem]{Lemma}
\newtheorem{corollary}[theorem]{Corollary}
\theoremstyle{definition}
\newtheorem{definition}[theorem]{Definition}
\begin{document}

\begin{center}
\textbf{\large{Some properties of Neumann quasigroups}}
\end{center}

\begin{center}
\textsc{Natalia  N. Didurik and  Victor A. Shcherbacov}
\end{center}

\bigskip

\textsc{Abstract.} Any Neumann  quasigroup $(Q, \cdot)$ (quasigroup with Neumann   identity  $ x \cdot(yz \cdot yx) = z$ is called Neumann quasigroup) can be presented in the form $x\cdot y = x-y$, where $(Q, +)$ is an abelian group.  Automorphism group  of Neumann quasigroup coincides with the group $Aut(Q, +)$. Any Schweizer quasigroup  (quasigroup with  Schweizer identity $xy \cdot xz = zy$ is called Schweizer  quasigroup) is a  Neumann quasigroup and vice versa. Any Neumann quasigroup is a GA-quasigroup.

\medskip

\textbf{Keywords:} Quasigroup, Neumann quasigroup, Schweizer quasigroup,   medial quasigroup,  left Bol quasigroup, GA-quasigroup,   right nucleus

\medskip

\textbf{AMS:} 20N05

\medskip

\section{Introduction}

Basic  concepts and definitions can be found in \cite{VD, HOP, 2017_Scerb}.
Autotopies of Neumann quasigroup, groups of quasiautomorphisms and pseudoauto\-morphisms, groups of regular permutations of Neumann quasigroup are researched in \cite{DIdurik_07, DIdurik_08}. Automorphisms, quasautomorphisms, pseudoautomorphisms
of Neumann quasigroup are described in \cite{DIdurik_07, DIdurik_08}.

\begin{definition}
Algebra $(Q, \cdot, \slash, \backslash)$ with three binary operations  that satisfies  the following   identities:
\begin{equation}
x\cdot(x \backslash y) = y, \label{(1)}
\end{equation}
\begin{equation}
(y / x)\cdot x = y, \label{(2)}
\end{equation}
\begin{equation}
x\backslash (x \cdot y) = y, \label{(3)}
\end{equation}
\begin{equation}
(y \cdot x)/ x = y, \label{(4)}
\end{equation}
 is called an equational quasigroup (often  it is called  a quasigroup).
 \end{definition}

\begin{definition}
A quasigroup $(Q, \cdot)$ is said to be Neumann quasigroup    if in this quasigroup  the identity
\begin{equation}\label{Neumann}
x \cdot(yz \cdot yx) = z
\end{equation}
 holds  true \cite{Higman, AS_57, GVARAM_69}, \cite[p. 248]{STEIN}.
\end{definition}

In the articles \cite{Higman, STEIN, GVARAM_69} the following result is pointed out.

\begin{theorem}\label{13_parastrophe}
If  quasigroup $(Q, \cdot)$ satisfies the following identity
\begin{equation} \label{Second_Equation}
x y \cdot z = y\cdot zx,
\end{equation}
   then $(13)$-parastrophe  of this quasigroup satisfies Neumann   identity (\ref{Neumann}).
\end{theorem}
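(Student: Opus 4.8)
The plan is to translate the Neumann identity, written in the $(13)$-parastrophe, back into a statement about the original operation $\cdot$, and then to apply the hypothesis (\ref{Second_Equation}) exactly once. Recall that the $(13)$-parastrophe $\ast$ of $(Q,\cdot)$ is determined by the equivalence
\begin{equation*}
a \ast b = c \iff c \cdot b = a,
\end{equation*}
so that $\ast$ is precisely the right-division operation of $(Q,\cdot)$. Rewriting the Neumann identity (\ref{Neumann}) in the operation $\ast$, I must establish that $x \ast \big((y \ast z)\ast(y \ast x)\big) = z$ holds for all $x,y,z \in Q$.

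First I would name the inner subterms. Set $a = y\ast z$, $b = y \ast x$, and $w = a \ast b$, so that the left-hand side is $x \ast w$; proving the identity then amounts to showing $x \ast w = z$, which by the equivalence above means $z \cdot w = x$. Unfolding the three definitions of $a$, $b$, $w$ through that same equivalence yields the system
\begin{gather*}
a \cdot z = y, \qquad b \cdot x = y, \qquad w \cdot b = a.
\end{gather*}

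The key step is to substitute $a = w \cdot b$ from the third equation into the first, obtaining $(w \cdot b)\cdot z = y$. I would then apply the hypothesis $xy\cdot z = y\cdot zx$, with $w,b,z$ playing the roles of $x,y,z$, to rewrite the left-hand side as $b \cdot (z\cdot w)$; combining this with the second equation gives $b\cdot(z \cdot w) = y = b\cdot x$. Left-cancelling $b$, which is permissible in a quasigroup, yields $z\cdot w = x$, i.e. the desired conclusion $x \ast w = z$.

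I do not anticipate a genuine obstacle: the entire argument is a single application of (\ref{Second_Equation}) sandwiched between a translation into $\cdot$ and one cancellation. The only point demanding care is fixing the convention for the $(13)$-parastrophe so that it reads as right division; since $(13)$ is an involution, the two standard conventions (applying the permutation or its inverse to the argument positions) coincide here, so no ambiguity arises. As a consistency check I would also verify the statement on the canonical model $x\cdot y = x+y$ over an abelian group, whose $(13)$-parastrophe is $x\ast y = x-y$, the standard Neumann form.
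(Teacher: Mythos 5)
Your proof is correct, and it is worth noting that the paper itself contains no proof of this statement at all: Theorem \ref{13_parastrophe} is simply quoted as a result ``pointed out'' in the literature (Higman--Neumann, Stein, Gvaramiya--Glukhov), so your argument supplies a verification that the paper omits. Your convention for the $(13)$-parastrophe (right division: $a \ast b = c \iff c\cdot b = a$) is the standard one, and your chain
\begin{equation*}
(w\cdot b)\cdot z = y \;\Longrightarrow\; b\cdot(z\cdot w) = y = b\cdot x \;\Longrightarrow\; z\cdot w = x
\end{equation*}
applies the hypothesis (\ref{Second_Equation}) exactly once and otherwise uses only quasigroup cancellation, so every step is valid. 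For comparison, one could instead reach the conclusion through the paper's later machinery: Theorem \ref{Abelian_Group} shows that a quasigroup satisfying (\ref{Second_Equation}) is an abelian group $x\cdot y = x+y$, whose $(13)$-parastrophe is $x-y$, and the Neumann identity for $x-y$ is an immediate computation --- essentially your own consistency check promoted to a proof. But that route is heavier, and in the paper's logical order Theorem \ref{13_parastrophe} precedes Theorem \ref{Abelian_Group}, so your purely equational argument, valid verbatim in any quasigroup, is the cleaner and more self-contained choice.
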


Notice that the identity (\ref{Second_Equation}) has also the following identity as its $(13)$-parastrophe
$(x \cdot yz) \cdot xy = z$ \cite[p. 313]{GVARAM_69},  \cite{DIdurik_08}.

\begin{definition}
Quasigroup $(Q, \cdot)$  is unipotent if and only if $x\cdot x = a$ for all $x\in Q$ and some fixed element $a\in Q$.
\end{definition}

\begin{definition}
Quasigroup $(Q, \cdot)$  has left unit element (a left unit) if there exists element $f$ (unique) such that  $f\cdot x = x$ for all $x\in Q$.
\end{definition}

\begin{definition}
Quasigroup $(Q, \cdot)$  has right unit element (a right unit) if there exists element $e$ (unique) such that  $x\cdot e = x$ for all $x\in Q$.
\end{definition}

\begin{definition}
A quasigroup $(Q, \cdot)$ is said to be left Bol quasigroup if the identity
\begin{equation}\label{Left_Bol}
x(y \cdot xz) = R^{-1}_{e_x}(x\cdot yx)\cdot z
\end{equation}
 holds true, where $x \cdot e_x = x$ for any $x\in Q$ \cite{BF_1966}.
\end{definition}

Quasigroup $(Q, \cdot)$ that satisfies the identity $xy\cdot uv = xu\cdot yv$ is called medial.
Any medial quasigrop $(Q, \cdot)$ can be be presented in the form $x\cdot y = \varphi x +\psi y + d$, where $(Q, +)$ is an abelian group, $\varphi, \psi \in Aut(Q,+)$, $\varphi \psi = \psi \varphi$, element $d$ is a fixed element of the set $Q$ \cite{VD, HOP}.

\begin{definition}
A quasigroup $(Q, \cdot)$ is said to be Moufang  quasigroup if the identity
\begin{equation}\label{Left_Bol}
x(y \cdot xz) = (x\cdot yf_x)x\cdot z
\end{equation}
 holds true, where $f_x x = x$ for any $x\in Q$ \cite{BF_1966}.
\end{definition}

Core of any loop  $(Q, \cdot)$ is defined in \cite{VD}. Core of Neumann quasigroup is defined in \cite{FLOR_BOL_QUAS_65}.

\begin{definition}
 Groupoid $(Q, \circ)$ of the form $x\circ y = x\cdot yx$, where $(Q, \cdot)$ is Neumann quasigroup, is called core of this quasigroup \cite{FLOR_BOL_QUAS_65}.
\end{definition}

\begin{definition}
The set \begin{equation*}
N_r = \{ a \in Q \mid x \cdot (y\cdot a) = (x\cdot y) \cdot a \quad \textrm {for all} \quad x, y \in Q \}
\end{equation*}
is called a right nucleus of quasigroup $(Q, \cdot)$ \cite{HOP}.
\end{definition}

\begin{definition}
A bijection $\theta$  of a set $Q$ is called a right pseudoautomorphism
of a quasigroup $(Q, \cdot)$ if there exists at least one element $c\in Q$
such that
\begin{equation} \label{Right_Psda}
\theta x \cdot  (\theta y \cdot  c) = (\theta(x \cdot y)) \cdot  c
\end{equation}
 for all $x, y \in  Q$, i.e.,  $(\theta,R_c\theta,R_c\theta)$  is an
autotopy a quasigroup $(Q, \cdot)$. The element $c$ is called a companion of $\theta$ \cite{HOP, KUNEN_99,  SCERB_03}.

A bijection $\theta$  of a set $Q$ is called a left pseudoautomorphism
of a quasigroup $(Q, \cdot)$ if there exists at least one element $c\in Q$
such that
\begin{equation} \label{Right_Psda}
(c\cdot \theta x) \cdot  \theta y  = c \cdot (\theta(x \cdot y))
\end{equation}
 for all $x, y \in  Q$, i.e.,  $(L_c \theta, \theta,L_c\theta)$  is an
autotopy a quasigroup $(Q, \cdot)$. The element $c$ is called a companion of left pseudoautomorphism  $\theta$ \cite{HOP, KUNEN_99,  SCERB_03}.
\end{definition}

Notice, in \cite{VD} right  pseudoautomorphism is called a left pseudoautomorphism and vice versa.

The set of all left (right) pseudoautomorphisms of a quasigroup $(Q, \cdot)$ forms  the group $LP(Q, \cdot)$ ($RP(Q, \cdot)$, respectively) \cite{HOP, SCERB_03}.

\begin{definition} \label{Left_GA_quasigroup}
If the group of the third components of the group $LP(Q, \cdot)$ of quasigroup $(Q, \cdot)$ acts transitively on the set $Q$, then quasigroup $(Q, \cdot)$ is called left G-quasigroup
\cite{KUNEN_99}.

If the group of the third components of the group $RP(Q, \cdot)$ of quasigroup $(Q, \cdot)$ acts transitively on the set $Q$, then quasigroup $(Q, \cdot)$ is called right G-quasigroup
\cite{KUNEN_99}.
\end{definition}

\begin{theorem} \label{identity_Element}
Any quasigroup with at least one non-trivial right pseudo\-automorphism has right identity element;

Any quasigroup with at least one non-trivial left pseudo\-automorphism has left identity element
\cite{VD, HOP} \cite[p. 23]{SCERB_03};
\end{theorem}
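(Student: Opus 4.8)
The plan is to read off a one-sided unit by recognizing a pseudoautomorphism as an isomorphism onto a translation-isotope of $(Q,\cdot)$. Fix a right pseudoautomorphism $\theta$ together with one of its companions $c$, so that $\theta x\cdot(\theta y\cdot c)=\theta(xy)\cdot c$ for all $x,y\in Q$. Since $\theta$ is a bijection, I would put $u=\theta x$, $v=\theta y$ and introduce the transported operation $u\ast v:=\theta(\theta^{-1}u\cdot\theta^{-1}v)$, for which $\theta$ is by construction an isomorphism $(Q,\cdot)\to(Q,\ast)$. The displayed identity then becomes
\begin{equation*}
u\ast v=(u\cdot(v\cdot c))/c=R_c^{-1}\bigl(u\cdot R_c v\bigr),
\end{equation*}
so that $(Q,\ast)$ is exactly the right $R_c$-isotope of $(Q,\cdot)$.

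The second step is to observe that this isotope always carries a right unit, namely $c/c$: using identity~(\ref{(2)}) one has $R_c(c/c)=(c/c)\cdot c=c$, whence $u\ast(c/c)=R_c^{-1}(u\cdot c)=R_c^{-1}(R_c u)=u$ for every $u$. Finally I would pull this unit back along $\theta$. Setting $e=\theta^{-1}(c/c)$ and applying the isomorphism gives $\theta(z\cdot e)=\theta z\ast\theta e=\theta z\ast(c/c)=\theta z$, and cancelling $\theta$ yields $z\cdot e=z$ for all $z$. Thus $e=\theta^{-1}(c/c)$ is a right unit of $(Q,\cdot)$, its uniqueness being immediate from the cancellation laws.

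The left-hand statement is entirely dual. Starting from a left pseudoautomorphism $\theta$ with companion $c$, i.e. $(c\cdot\theta x)\cdot\theta y=c\cdot\theta(xy)$, the same substitution yields $u\ast v=L_c^{-1}(L_c u\cdot v)$, the left $L_c$-isotope; this has left unit $c\backslash c$ by identity~(\ref{(1)}) (since $L_c(c\backslash c)=c\cdot(c\backslash c)=c$), and transporting it back along $\theta$ produces the left unit $\theta^{-1}(c\backslash c)$ of $(Q,\cdot)$.

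The only genuinely conceptual step is the first one---seeing the autotopy $(\theta,R_c\theta,R_c\theta)$ as transport of structure onto the $R_c$-isotope; once the operation $\ast$ has been written in closed form, everything reduces to the quasigroup identities (\ref{(1)})--(\ref{(4)}) together with left and right cancellation. I would also remark that the argument produces the unit explicitly as $\theta^{-1}(c/c)$ and uses only the existence of a companion rather than the non-triviality of $\theta$; the hypothesis that $\theta$ be non-trivial serves merely to guarantee that such a pseudoautomorphism is genuinely available.
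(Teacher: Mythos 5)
Your proof is correct. Note, however, that the paper offers no proof of its own here: Theorem \ref{identity_Element} is quoted as known background, with citations to Belousov, Pflugfelder and Shcherbacov, so the only comparison available is with the classical argument in those sources. Measured against that, your transport-of-structure route is sound but longer than necessary. The standard proof is a single substitution: in $\theta x\cdot(\theta y\cdot c)=\theta(x\cdot y)\cdot c$ put $y_0=\theta^{-1}(c/c)$, so that $\theta y_0\cdot c=c$ by identity (\ref{(2)}); the identity collapses to $\theta x\cdot c=\theta(x\cdot y_0)\cdot c$ for all $x$, and cancelling $R_c$ and then the injective map $\theta$ gives $x\cdot y_0=x$, i.e.\ $y_0$ is the right unit --- exactly the element $\theta^{-1}(c/c)$ your argument produces, reached without ever introducing the operation $\ast$ (dually, $\theta^{-1}(c\backslash c)$ serves in the left case). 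What your framing buys is a conceptual explanation of why the substitution works: the autotopy $(\theta,R_c\theta,R_c\theta)$ exhibits $\theta$ as an isomorphism of $(Q,\cdot)$ onto its $R_c$-isotope, and that isotope always carries the right unit $c/c$. Your closing remark is also accurate and worth keeping: the argument uses only the existence of a companion, not the non-triviality of $\theta$, so you prove a formally stronger statement; for instance, the trivial case $\theta=\varepsilon$ with companion $c$ says precisely that $c\in N_r$, and already this forces $c/c$ to be a right unit.
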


\section{Some results}

\begin{theorem} \label{Abelian_Group}
If quasigroup $(Q, \cdot)$ satisfies the identity (\ref{Second_Equation}), then this quasigroup is an abelian group.
\end{theorem}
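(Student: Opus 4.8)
The plan is to squeeze everything out of the single hypothesis $(x\cdot y)\cdot z = y\cdot(z\cdot x)$, i.e.\ identity (\ref{Second_Equation}), in three stages: first force commutativity, then deduce associativity, and finally invoke the classical fact that an associative quasigroup is a group. As a preliminary rewriting I would record that, after renaming the variables, (\ref{Second_Equation}) is equivalent to
\[
a\cdot(b\cdot c) = (c\cdot a)\cdot b, \qquad (\star)
\]
holding for all $a,b,c\in Q$. I will use $(\star)$ and the original form interchangeably, together with the quasigroup fact that every left and right translation is a bijection, so that a factor occurring on the same side of two equal products may be cancelled.

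The crux is to evaluate the fourfold product $a\cdot\bigl(b\cdot(c\cdot d)\bigr)$ in two different ways. Reducing the inner product first and then the outer one (two applications of $(\star)$) yields $(c\cdot a)\cdot(d\cdot b)$; reducing the outer product first and then rewriting the two resulting triple products (again via (\ref{Second_Equation})) yields $(a\cdot c)\cdot(b\cdot d)$. Equating the two outcomes gives
\[
(a\cdot c)\cdot(b\cdot d) = (c\cdot a)\cdot(d\cdot b).
\]
Specialising $d=b$ makes the right-hand factor on both sides equal to $b\cdot b$, so applying $R_{b\cdot b}^{-1}$ (bijectivity of the right translation by $b\cdot b$) leaves $a\cdot c = c\cdot a$; hence $(Q,\cdot)$ is commutative. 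I expect this double computation to be the main obstacle: one must choose the nesting and the order of rewriting so that exactly the ``swap'' producing commutativity is the information that survives, rather than a tautology.

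Once commutativity is available, associativity is short. From (\ref{Second_Equation}) and commuting factors, $(x\cdot y)\cdot z = y\cdot(z\cdot x) = (x\cdot z)\cdot y$, while $(\star)$ together with commutativity gives $x\cdot(y\cdot z) = (z\cdot x)\cdot y = (x\cdot z)\cdot y$. Comparing the two expressions shows $x\cdot(y\cdot z) = (x\cdot y)\cdot z$, so $(Q,\cdot)$ is associative.

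Finally, an associative quasigroup is a group: associativity together with the division operations of the quasigroup (the solvability expressed by identities (\ref{(1)})--(\ref{(4)})) supplies a two-sided identity and two-sided inverses by the standard argument. Being moreover commutative, $(Q,\cdot)$ is an abelian group, which is the assertion of the theorem.
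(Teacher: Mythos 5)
Your proof is correct, but it takes a genuinely different route from the paper's. The paper first manufactures a unit element: substituting $x=e_z$ (the local right unit, $z\cdot e_z=z$) into (\ref{Second_Equation}) and cancelling $z$ shows that $e_z$ is a left unit $f$; substituting $x=f$ shows $f$ is also a right unit, so $(Q,\cdot)$ is a loop; then the single substitution $y=f$ yields commutativity, and commutativity applied to (\ref{Second_Equation}) yields associativity, so the loop is immediately an abelian group with no appeal to outside facts. You proceed in the opposite order: commutativity comes first, extracted from the four-variable consequence $(a\cdot c)\cdot(b\cdot d)=(c\cdot a)\cdot(d\cdot b)$ that you obtain by expanding $a\cdot\bigl(b\cdot(c\cdot d)\bigr)$ in two ways and then cancelling the factor $b\cdot b$; associativity then follows from commutativity plus (\ref{Second_Equation}); and the unit and inverses appear only at the very end, via the classical theorem that an associative quasigroup is a group. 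I checked both of your expansions, the cancellation step, and the two derivations of $(x\cdot z)\cdot y$; all are correct applications of $(\star)$ and (\ref{Second_Equation}). The paper's route buys brevity and self-containedness: two one-line substitutions produce the two-sided unit, after which commutativity and associativity are immediate and no external theorem is needed. Your route buys independence from local units --- commutativity is obtained purely equationally, which is conceptually pleasant --- at the price of a heavier fourfold computation and of outsourcing the existence of the identity and inverses to a standard but unproved-here fact about associative quasigroups.
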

\begin{proof}
We put in the identity (\ref{Second_Equation}) $x=e_z$. Then we have $e_z y \cdot z = y\cdot ze_z = yz$, $e_z y  = y$. Therefore element $e_z$ is left unity of quasigroup $(Q, \cdot)$, i.e., $e_z\cdot x = x$ for all $x\in Q$. Let be $e_z = f$.

Suppose that in the identity (\ref{Second_Equation}) $x=f$. Then $yz = y\cdot zf$ and after cancellation  $z = zf$. Therefore the element $f$ is the unity element of quasigroup $(Q, \cdot)$, i.e., quasigroup $(Q, \cdot)$ is a loop.

If we substitute in the identity (\ref{Second_Equation}) $y=f$, then we obtain that  loop $(Q, \cdot)$ is commutative.

Applying commutativity to the identity (\ref{Second_Equation}) we obtain  associative identity  $yz \cdot x  = y\cdot zx $.
\end{proof}

\begin{theorem} \label{Neyman_quas}
Any Neumann quasigroup $(Q, \cdot)$ is isotope of an abelian group $(Q, +)$ of the form $x\cdot y = x - y$.
\end{theorem}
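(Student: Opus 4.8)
The plan is to reduce the statement to Theorem~\ref{Abelian_Group} by passing to the $(13)$-parastrophe of $(Q,\cdot)$. Recall that the $(13)$-parastrophe of $(Q,\cdot)$ is exactly the right division operation $/$: by definition $x/y = z \iff z\cdot y = x$, so the relation defining $/$ is obtained from that of $\cdot$ by interchanging the first and third coordinates. I would therefore work with the quasigroup $(Q, /)$ and aim to show it is an abelian group, from which the representation $x\cdot y = x-y$ will drop out by a single substitution.

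First I would establish that $(Q,/)$ satisfies identity~(\ref{Second_Equation}). Theorem~\ref{13_parastrophe} gives the implication in one direction: a quasigroup satisfying (\ref{Second_Equation}) has a $(13)$-parastrophe satisfying the Neumann identity (\ref{Neumann}). The point I need is the converse, and this follows from the general principle that parastrophy induces a bijective correspondence between identities and their $\sigma$-transforms, namely $(Q,A)$ satisfies an identity $w$ if and only if $(Q, A^{\sigma})$ satisfies the $\sigma$-transform $w^{\sigma}$. Theorem~\ref{13_parastrophe} says that the $(13)$-transform of (\ref{Second_Equation}) is the Neumann identity; since $(13)$ is an involution, applying $(13)$ once more shows that the $(13)$-transform of (\ref{Neumann}) is (\ref{Second_Equation}). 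Hence the $(13)$-parastrophe $(Q,/)$ of the Neumann quasigroup $(Q,\cdot)$ satisfies precisely (\ref{Second_Equation}). I expect this converse to be the main obstacle, not because it is deep, but because one must handle the parastrophe bookkeeping carefully and be sure that the identity produced really is (\ref{Second_Equation}) rather than merely an equivalent companion form such as the one noted in the remark after Theorem~\ref{13_parastrophe}.

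With $(Q,/)$ known to satisfy (\ref{Second_Equation}), Theorem~\ref{Abelian_Group} immediately yields that $(Q,/)$ is an abelian group; write it additively as $(Q,+)$, so that $x/y = x+y$ for all $x,y\in Q$. Finally I would translate this back to $\cdot$ using the defining identity (\ref{(2)}) of the right division, $(x/y)\cdot y = x$. Substituting $x/y = x+y$ gives $(x+y)\cdot y = x$, and setting $u = x+y$, so that $x = u-y$ in the group $(Q,+)$, turns this into $u\cdot y = u-y$. Renaming $u$ as $x$ gives $x\cdot y = x-y$, which is the asserted form, exhibiting $(Q,\cdot)$ as an isotope of the abelian group $(Q,+)$. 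As a sanity check one verifies directly that $x\cdot y = x-y$ satisfies (\ref{Neumann}), so the correspondence is genuine and not merely formal.
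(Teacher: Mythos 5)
Your proof is correct and follows essentially the same route as the paper: pass to the $(13)$-parastrophe (the right division $/$), apply Theorem~\ref{Abelian_Group} to conclude $(Q,/)$ is an abelian group, and translate back to obtain $x\cdot y = x-y$. The only differences are improvements in self-containedness rather than of method: you make explicit the converse direction of Theorem~\ref{13_parastrophe} (which the paper uses silently), and you replace the paper's citation of Belousov's fact that the $(13)$-parastrophe of an IP-loop is an isotope of the form $(\varepsilon,\rho,\varepsilon)$ with a direct two-line computation from the defining identity $(x/y)\cdot y = x$.
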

\begin{proof}
The proof follows from Theorems \ref{13_parastrophe},   \ref{Abelian_Group} and the following fact (\cite[p. 53]{1a})
: (13)-parastrophe $(Q, \cdot)$ of IP-loop (it is obvious, that any abelian group  $(Q, +)$ is an IP-loop) is its isotope of the form $(\varepsilon, \rho, \varepsilon)$, i.e., $x\cdot y = x - y$, because in abelian groups $\rho = I$, where $x+Ix=0$ for all $x\in Q$.
\end{proof}

\begin{theorem}\label{AutotopyNeumann}
Any autotopy of Neumann quasigroup $(Q, \cdot)$ has the form:
\begin{equation} \label{Autotopy_form}
(L^+_a, L^+_{-b}, L^+_{a+b})\theta,
\end{equation}
where $L^+_a, L^+_{-b},  L^+_{a+b}$ are translations of abelian group $(Q, +)$, $\theta \in Aut(Q, +)$.
\end{theorem}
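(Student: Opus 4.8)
The plan is to invoke Theorem \ref{Neyman_quas} to replace the abstract multiplication by the group operation and then solve the resulting functional equation by hand. By Theorem \ref{Neyman_quas} we may take $x\cdot y = x - y$ for an abelian group $(Q,+)$. A triple $(\alpha,\beta,\gamma)$ of bijections of $Q$ is an autotopy of $(Q,\cdot)$ exactly when $\alpha x \cdot \beta y = \gamma(x\cdot y)$ for all $x,y$, that is
\begin{equation*}
\alpha x - \beta y = \gamma(x - y) \qquad \text{for all } x,y \in Q. \tag{$\ast$}
\end{equation*}
First I would record the easy converse: for $\theta \in Aut(Q,+)$ and $a,b\in Q$ the triple $(L^+_a\theta, L^+_{-b}\theta, L^+_{a+b}\theta)$ satisfies $(\ast)$, since $(a+\theta x)-(-b+\theta y) = (a+b) + \theta(x-y)$. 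Thus the stated form always gives an autotopy, and it remains to prove that every autotopy arises this way.

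Next I would extract the constants by specialising $(\ast)$. Setting $y=0$ gives $\gamma x = \alpha x - \beta 0$, and setting $x=0$ gives $\gamma(-y) = \alpha 0 - \beta y$. Writing $p=\alpha 0$, $q=\beta 0$, $r=\gamma 0$ (so that $r=p-q$ on putting $x=y=0$), these read $\alpha x = \gamma x + q$ and $\beta y = p - \gamma(-y)$. Substituting both back into $(\ast)$ and using commutativity of $+$ collapses everything to a single functional equation in $\gamma$ alone, namely $\gamma x + \gamma(-y) - r = \gamma(x-y)$.

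The heart of the argument is to normalise $\gamma$ to an automorphism. I would set $\theta x = \gamma x - r$, so that $\theta 0 = 0$ and $\theta$ is a bijection (a translate of the bijection $\gamma$). The displayed functional equation becomes $\theta x + \theta(-y) = \theta(x-y)$; replacing $y$ by $-y$ and using $\theta 0 = 0$ yields $\theta(x+y) = \theta x + \theta y$, so $\theta \in Aut(Q,+)$. Then, with $a:=p$ and $b:=-q$ (whence $a+b=p-q=r$), I would read off $\gamma x = r+\theta x = L^+_{a+b}\theta x$, $\alpha x = \gamma x + q = p+\theta x = L^+_a\theta x$, and $\beta y = p-\gamma(-y) = q+\theta y = L^+_{-b}\theta y$, which is precisely the form (\ref{Autotopy_form}).

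The only genuine obstacle is the passage from $(\ast)$ to the additivity of $\theta$: one must pick the substitutions $x=0$ and $y=0$ so as to eliminate $\alpha$ and $\beta$, and then be careful with signs when rearranging inside the abelian group so as to land on $\theta(x+y)=\theta x+\theta y$ rather than a twisted variant. Showing that $\theta$ is a genuine bijection, and hence an automorphism rather than merely an additive endomorphism, is immediate here, since $\theta$ differs from the permutation $\gamma$ by a translation.
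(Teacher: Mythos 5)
Your proof is correct, but it follows a genuinely different route from the paper's. The paper's proof is a two-citation argument: it invokes the known classification of autotopies of groups (every autotopy of $(Q,+)$ has the form $(L^+_a\theta, L^+_b\theta, L^+_{a+b}\theta)$ with $\theta\in Aut(Q,+)$) together with the fact that autotopy groups of isotopic quasigroups are conjugate, and then simply computes the conjugate of a group autotopy by the isotopy $(\varepsilon, I, \varepsilon)$ coming from $x\cdot y = x+Iy$, using $IL^+_b\theta I = L^+_{-b}\theta$. You instead solve the functional equation $\alpha x - \beta y = \gamma(x-y)$ from first principles: specializing $x=0$ and $y=0$ to express $\alpha$ and $\beta$ through $\gamma$, normalizing $\gamma$ by the translation $L^+_{-r}$ so that $\theta = L^+_{-r}\gamma$ fixes $0$, and deducing additivity of $\theta$ from the collapsed equation $\theta x + \theta(-y)=\theta(x-y)$ — in effect you re-prove, in this special case, exactly the classical group-autotopy classification that the paper cites. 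All your computations check out (including the sign bookkeeping $a=p$, $b=-q$, $a+b=r$, and the use of $\theta(-y)=-\theta y$ in recovering $\beta = L^+_{-b}\theta$), and you even supply the converse direction, which the paper leaves implicit. What your approach buys is self-containedness: no external facts about autotopy groups are needed, only Theorem \ref{Neyman_quas}. What the paper's approach buys is brevity and generality: the conjugation argument works verbatim for any principal isotope of any group, abelian or not, so the same two lines describe autotopies of a much wider class of quasigroups.
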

\begin{proof}
From Theorem \ref{Neyman_quas} it follows that quasigroup $(Q, \cdot)$ is isotope of commutative  group $(Q, +)$ of the form $(\varepsilon, I, \varepsilon)$, $I^2 = \varepsilon$, $I\theta = \theta I$ for any $\theta \in Aut(Q, +)$.

Any autotopy of group $(Q, +)$ has the form $(L^+_a, L^+_{b}, L^+_{a+b})\theta $, where translations and automorphism are the same as it pointed out in formulation of Theorem \ref{AutotopyNeumann} \cite{VD,  2017_Scerb}.

Autotopy groups of isotopic quasigroups are conjugate \cite{VD,  2017_Scerb}. Then we have:
\begin{equation}
(L^+_a\theta, IL^+_{b}\theta I, L^+_{a+b}\theta) = (L^+_a, L^+_{-b}, L^+_{a+b})\theta,
\end{equation}
since $IL^+_{b}\theta I = L^+_{-b}\theta$.
\end{proof}

\begin{corollary}
In Neumann quasigroups $(Q, \cdot)$ $Aut(Q, \cdot) = Aut(Q, +)$.
\end{corollary}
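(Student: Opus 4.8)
The plan is to exploit the structural description of autotopies from Theorem \ref{AutotopyNeumann} together with the explicit form $x \cdot y = x - y$ supplied by Theorem \ref{Neyman_quas}. The governing observation is that a bijection $\alpha$ of $Q$ is an automorphism of $(Q, \cdot)$ if and only if the diagonal triple $(\alpha, \alpha, \alpha)$ is an autotopy of $(Q, \cdot)$. This reduces the computation of $Aut(Q, \cdot)$ to determining which autotopies are diagonal, which the already-established autotopy form makes routine.

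First I would take an arbitrary $\alpha \in Aut(Q, \cdot)$, so that $(\alpha, \alpha, \alpha)$ is an autotopy. By Theorem \ref{AutotopyNeumann} there exist $a, b \in Q$ and $\theta \in Aut(Q, +)$ with $\alpha = L^+_a \theta = L^+_{-b}\theta = L^+_{a+b}\theta$. Since $\theta$ is a bijection, right-cancelling it gives $L^+_a = L^+_{-b} = L^+_{a+b}$, and in the abelian group $(Q, +)$ this forces $a = -b$ and $b = 0$, whence $a = b = 0$. Therefore $\alpha = \theta \in Aut(Q, +)$, which establishes the inclusion $Aut(Q, \cdot) \subseteq Aut(Q, +)$. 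For the reverse inclusion I would argue directly from $x \cdot y = x - y$: for any $\theta \in Aut(Q, +)$,
\[
\theta(x \cdot y) = \theta(x - y) = \theta x - \theta y = \theta x \cdot \theta y,
\]
so $\theta \in Aut(Q, \cdot)$. Combining the two inclusions yields $Aut(Q, \cdot) = Aut(Q, +)$.

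I do not anticipate a serious obstacle; the only point demanding care is to read Theorem \ref{AutotopyNeumann} in both directions. It not only forces every autotopy into the stated shape but—through its derivation by conjugation from the group autotopy $(L^+_a, L^+_b, L^+_{a+b})\theta$—also guarantees that each such triple, in particular the diagonal one obtained at $a = b = 0$, is a genuine autotopy. This two-sided reading is exactly what upgrades the conclusion from an inclusion to the full equality of the two automorphism groups.
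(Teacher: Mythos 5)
Your proposal is correct and follows essentially the same route as the paper: the paper's proof consists precisely of the observation that an automorphism is an autotopy with equal components, with the rest delegated to Theorem \ref{AutotopyNeumann}, which is exactly the diagonal-autotopy argument you carry out in detail (forcing $a=b=0$), plus the easy verification that every $\theta\in Aut(Q,+)$ preserves $x\cdot y = x-y$. Your write-up simply makes explicit both inclusions that the paper leaves implicit.
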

\begin{proof}
Automorphism is an autotopy with equal components. The rest follows from Theorem \ref{AutotopyNeumann}.
\end{proof}

\begin{corollary}
\label{AutotopyNeumann_quas}
Any autotopy of Neumann quasigroup $(Q, \cdot)$ has the form:
\begin{equation} \label{Autotopy_form_2}
(L^{\cdot}_a, L^{\cdot}_{Ib}, L^{\cdot}_{a\cdot Ib})\theta_1 ,
\end{equation}
where $L^{\cdot}_a, L^{\cdot}_{Ib}, L^{\cdot}_{a\cdot Ib}$ are translations of quasigroup  $(Q, \cdot)$, $\theta_1 = I\theta  \in Aut(Q, +) = Aut(Q, \cdot)$ \cite{DIdurik_08}.
\end{corollary}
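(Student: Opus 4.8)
The plan is to start from the explicit description of autotopies furnished by Theorem \ref{AutotopyNeumann}, namely $(L^+_a, L^+_{-b}, L^+_{a+b})\theta$ with $\theta\in Aut(Q,+)$, and merely re-express the three additive translations as multiplicative translations of $(Q,\cdot)$. The bridge between the two kinds of translation is the representation $x\cdot y = x-y$ from Theorem \ref{Neyman_quas}: since $L^{\cdot}_a x = a\cdot x = a-x = L^+_a(Ix)$, where $Ix=-x$ and $I^2=\varepsilon$, one obtains the operator identity $L^{\cdot}_a = L^+_a I$ for every $a\in Q$. This single identity is what allows every additive left translation appearing in Theorem \ref{AutotopyNeumann} to be traded for a multiplicative one at the cost of one factor $I$.

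Next I would introduce $\theta_1 = I\theta$ and check that it is an admissible automorphism factor. Because $(Q,+)$ is abelian, inversion $I$ lies in $Aut(Q,+)$, so $\theta_1 = I\theta\in Aut(Q,+)$, and by the preceding Corollary $Aut(Q,+)=Aut(Q,\cdot)$; hence $\theta_1$ is a genuine automorphism of the Neumann quasigroup. This substitution is precisely what converts the additive automorphism $\theta$ into the multiplicative one $\theta_1$ while supplying the extra copy of $I$ needed to cancel the one produced by rewriting each translation.

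Finally I would verify componentwise that $(L^{\cdot}_a, L^{\cdot}_{Ib}, L^{\cdot}_{a\cdot Ib})\theta_1$ reproduces $(L^+_a\theta, L^+_{-b}\theta, L^+_{a+b}\theta)$. Using $L^{\cdot}_c = L^+_c I$, $\theta_1 = I\theta$ and $I^2=\varepsilon$, each multiplicative translation absorbs exactly one $I$: the first component becomes $L^+_a I\cdot I\theta = L^+_a\theta$, the second becomes $L^+_{Ib}I\cdot I\theta = L^+_{-b}\theta$, and the third becomes $L^+_{a\cdot Ib}\theta$. The one point requiring care is this third subscript, which must be evaluated inside $(Q,\cdot)$ by $x\cdot y = x-y$: one computes $a\cdot Ib = a-(-b) = a+b$, so the third component equals $L^+_{a+b}\theta$. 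Since all three components agree with the form given in Theorem \ref{AutotopyNeumann}, the two expressions coincide and the corollary follows; the remainder is the purely mechanical cancellation of the $I$'s generated by the translation rewrite.
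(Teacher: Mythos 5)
Your proposal is correct and takes essentially the same route as the paper: the paper's (very terse) proof consists exactly of the translation-conversion identities $x\cdot y = x+Iy$, $x+y=x\cdot Iy$, $L^+_x y = L^{\cdot}_x Iy$, which are equivalent to your operator identity $L^{\cdot}_a = L^+_a I$, leaving the substitution into Theorem \ref{AutotopyNeumann} implicit. You simply carry out in full the componentwise cancellation of the $I$'s (including the check $a\cdot Ib = a+b$ and the observation $I\in Aut(Q,+)$) that the paper leaves to the reader.
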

\begin{proof}
From Theorem \ref{Neyman_quas} we have $x\cdot y =x + Iy$, $x+y = x\cdot Iy$, $L^+_x y = L^{\cdot}_x Iy $.
\end{proof}

It is clear that given above  forms of autotopy of Neumann quasigroup $(Q, \cdot)$ are not unique.
In \cite[Theorem 7]{DIdurik_08}  the following form of autotopy of Neumann quasigroup $(Q, \cdot)$ is given:
\begin{equation}
\left(L^{\cdot}_s, L^{\cdot}_{t}, L^{\cdot}_{s}  R^{-1}_t \right) \theta ,
\end{equation}
where $\theta \in Aut (Q, \cdot)$, $s, t$ are fixed elements of quasigroup $(Q, \cdot)$.

We list some properties of Neumann quasigroups proved in \cite{DIdurik_07, DIdurik_08}. Now we can use
Theorem \ref{Neyman_quas} by proving many of them.

\begin{corollary}
\begin{enumerate}
  \item Any Neumann quasigroup $(Q, \cdot)$  is unipotent and has right unit element;
  \item any loop which is isotope of Neumann quasigroup is a commutative group;
  \item any Neumann quasigroup is a medial quasigroup;
  \item any Neumann quasigroup is a left Bol quasigroup;
  \item any Neumann quasigroup is a Moufang quasigroup;
  \item core of Neumann quasigroup is a distributive groupoid;
  \item right nucleus of Neumann quasigroup $(Q, \cdot)$ consists of elements of the set $Q$ such that $a = -a$;
 \end{enumerate}
\end{corollary}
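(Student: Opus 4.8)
The plan is to use Theorem~\ref{Neyman_quas} uniformly: since every Neumann quasigroup can be written as $x\cdot y = x - y$ over an abelian group $(Q,+)$, each of the seven assertions collapses to an elementary computation inside $(Q,+)$. Before addressing the individual items I would record the two local-unit facts that drive the rest. The right local unit $e_x$, defined by $x\cdot e_x = x$, is forced to be the neutral element $0$ of $(Q,+)$, since $x - e_x = x$ gives $e_x = 0$; consequently $R_{e_x} = \varepsilon$ and $R^{-1}_{e_x} = \varepsilon$. The left local unit $f_x$, defined by $f_x\cdot x = x$, satisfies $f_x - x = x$, hence $f_x = 2x$. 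I would also note that $x\cdot y = \varepsilon x + Iy$, where $\varepsilon$ is the identity and $I$ is the inversion $Iy = -y$ of $(Q,+)$.

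With these in hand, items (1), (3), (6) and (7) are direct substitutions. For (1), $x\cdot x = x - x = 0$ yields unipotency with fixed element $0$, and $x\cdot 0 = x$ exhibits $0$ as right unit. For (3), the expression $x\cdot y = \varepsilon x + Iy + 0$ is exactly the medial (Toyoda) form $\varphi x + \psi y + d$ with $\varphi = \varepsilon$, $\psi = I$, $d = 0$; since $(Q,+)$ is abelian, $\varepsilon$ and $I$ commute, which is the mediality criterion. For (6) the core operation becomes $x\circ y = x\cdot yx = 2x - y$, and I would verify both distributive laws by expansion, each side of left distributivity reducing to $2x - 2y + z$ and each side of right distributivity to $4x - 2y - z$. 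For (7), expanding $x\cdot(y\cdot a) = x - y + a$ against $(x\cdot y)\cdot a = x - y - a$ shows the associator vanishes precisely when $a = -a$, so $N_r = \{a\in Q \mid a = -a\}$.

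Items (4) and (5) are the identity verifications that use the local units. For the left Bol identity I would substitute $e_x = 0$ (so $R^{-1}_{e_x} = \varepsilon$) and expand: the left side $x(y\cdot xz)$ becomes $2x - y - z$, and the right side $R^{-1}_{e_x}(x\cdot yx)\cdot z = (x\cdot yx)\cdot z$ also becomes $2x - y - z$. For the Moufang identity I would substitute $f_x = 2x$; then $(x\cdot yf_x)x\cdot z$ expands to $2x - y - z$ as well, matching the left side. Item (2) is the only one that is not purely computational: a loop isotopic to a Neumann quasigroup is, by transitivity of isotopy, isotopic to the abelian group $(Q,+)$, and Albert's theorem---a loop isotopic to a group is isomorphic to that group---forces the loop to be isomorphic to $(Q,+)$, hence a commutative group.

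The computations are routine; the main obstacle is bookkeeping. One must keep the two local units $e_x = 0$ and $f_x = 2x$ straight, and must expand the two nonassociative products (the quasigroup product $x\cdot y = x-y$ and the core product $x\circ y = 2x - y$) consistently throughout the Bol, Moufang, and distributivity checks, since a single sign slip in one of the nested expansions propagates. Additional care is needed in (2) to invoke Albert's isotopy theorem rather than to attempt a direct verification of the group axioms.
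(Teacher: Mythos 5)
Your proposal is correct and follows essentially the same route as the paper: everything is reduced via Theorem~\ref{Neyman_quas} to the form $x\cdot y = x-y$ over the abelian group $(Q,+)$ and verified by direct computation, which is exactly what the paper does (it spells out only cases 6 and 7 and leaves the rest, including the appeal to Albert's isotopy theorem in item 2, implicit). Your write-up simply fills in the details the paper omits, such as the local units $e_x=0$, $f_x=2x$ for the Bol and Moufang checks.
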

\begin{proof}
We can use  Theorem \ref{Neyman_quas}.

Case 6. Taking into account Theorem \ref{Neyman_quas} we can rewrite definition of core of Neumann quasigroup in the following form:
$x\circ y = 2\cdot x - y$, where $(Q, +)$ is an abelian group.

We check that groupoid $(Q, \circ)$ is right distributive.
We can rewrite the identity of right distributivity  $(x\circ  y) \circ z = (x\circ z) \circ (y\circ z)$ in the form:

the left part $(x\circ  y) \circ z = 2(2x - y)-z = 4 x - 2y -z $;

the right part $(x\circ z) \circ (y\circ z) = 2(2x - z) - (2 y -z) = 4x - 2z -2y + z = 4x -2y -z$.

Comparing both right  sides of the last two equalities we obtain announced result.

The proof for left distributivity is similar.

The identity $x\circ (y\circ z) = (x \circ y) \circ (x\circ z)$ has the form:

the left part $x\circ  (y \circ z)  = 2x - 2y+ z $;

the right part $(x\circ y) \circ (x\circ z) = 2(2x - y) - (2 x -z) = 4x - 2y -2x + z = 2x -2y +z$.

\smallskip

Case 7.  We  have $x\cdot (y \cdot a) = x - (y - a) = x - y + a$,  $(x\cdot y) \cdot a = x - y - a$.
\end{proof}

\begin{definition}
Quasigroup $(Q, \cdot)$ with identity
\begin{equation}\label{Sweizer}
y z \cdot yx   = x z
\end{equation}
is called Schweizer  quasigroup \cite[p. 313]{GVARAM_69}.
\end{definition}

By proving of the  next theorem we have used Prover 9 \cite{MAC_CUNE_PROV}.
\begin{theorem} \label{Swezer_neumav}
Any Schweizer quasigroup $(Q, \cdot)$ is a  Neumann quasigroup and vice versa.
\end{theorem}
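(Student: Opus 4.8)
My plan is to prove the two implications separately, using for one direction the representation theorem already in hand and for the other a short chain of substitutions in the Schweizer identity. The unifying observation is that modulo the relation $L_x^2=\varepsilon$ (all left translations involutory) the Neumann identity $x\cdot(yz\cdot yx)=z$ and the Schweizer identity $yz\cdot yx=xz$ are literally the same identity, read with or without the outer factor $L_x$; so the real content of the theorem is that each of the two classes has involutory left translations.

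First I would treat the implication Neumann $\Rightarrow$ Schweizer, where Theorem~\ref{Neyman_quas} does almost all the work. By that theorem every Neumann quasigroup can be written as $x\cdot y = x-y$ over an abelian group $(Q,+)$, so the left-hand side of the Schweizer identity (\ref{Sweizer}) becomes $yz\cdot yx = (y-z)-(y-x) = x-z = x\cdot z$, which is exactly its right-hand side. Hence every Neumann quasigroup is Schweizer, and this half reduces to a one-line computation.

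For the converse, Schweizer $\Rightarrow$ Neumann, I would argue directly from identity (\ref{Sweizer}), $yz\cdot yx=xz$, without first seeking a linear form. The key intermediate target is $y\cdot(y\cdot x)=x$, i.e. $L_y^2=\varepsilon$; once it is available the Neumann identity is immediate, since $x\cdot(yz\cdot yx)=x\cdot(xz)=z$, the first equality being (\ref{Sweizer}) and the second being $L_x^2=\varepsilon$. To obtain $L_x^2=\varepsilon$ I would proceed in three substitutions. Setting $x=z$ in (\ref{Sweizer}) gives $yz\cdot yz=zz$; as $y$ runs over $Q$ the element $yz$ runs over all of $Q$ (right translations are bijective), so $a\cdot a$ is the constant $e:=z\cdot z$ and the quasigroup is unipotent. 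Setting $y=x$ then gives $xz\cdot e=xz$, and since $xz$ ranges over $Q$ this shows $e$ is a global right unit, $b\cdot e=b$. Finally substituting $z=e$ and using $ye=y$, $xe=x$ yields $y\cdot yx=x$, as required.

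The routine half is the forward implication; the substantive step is the derivation that a Schweizer quasigroup is unipotent with a global right unit, from which involutivity of the left translations, and hence the Neumann identity, follows. The main obstacle is purely in handling these substitutions carefully with the quasigroup cancellation laws, since each derived identity only becomes meaningful once one observes that the relevant expression ($yz$, respectively $xz$) ranges over all of $Q$ by bijectivity of the translations. Beyond this bookkeeping I expect no difficulty, and the ``vice versa'' comes out genuinely symmetric: both classes satisfy $L_x^2=\varepsilon$, and against that background the two identities coincide.
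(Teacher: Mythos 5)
Your proof is correct and takes essentially the same route as the paper: the forward direction is the same one-line computation using the form $x\cdot y = x-y$ from Theorem~\ref{Neyman_quas}, and the converse follows the paper's exact skeleton of producing a right unit $e=yy$, deriving $y\cdot yx = x$, and then substituting the Schweizer identity into that relation to recover the Neumann identity. The only (minor) difference is how the right unit is obtained: the paper substitutes $x \to y\backslash x$ and invokes the identity $x\cdot(x\backslash y)=y$, whereas you use the specializations $x=z$ and $y=x$ together with surjectivity of the translations, which avoids the division operation altogether.
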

\begin{proof}
$\Longrightarrow $ We use the form of Neumann quasigroup (Theorem  \ref{Neyman_quas}). Then identity (\ref{Sweizer}) takes the form:

left part $ y z \cdot yx = y-z -(y-x) = x-z$;

 right part $x \cdot z = x-z $.

  Then any Schweizer quasigroup is a Neumann quasigroup \cite{DIdurik_07}.

  $\Longleftarrow $ We prove that from  the identity (\ref{Sweizer}) follows  the identity (\ref{Neumann}).

Here  we use Schweizer identity in the form $yx\cdot yz = zx$.
  If in this  identity  we make substitution $x \rightarrow y\backslash x$,  use the identity (\ref{(1)})  and  obtain the following identity:
  \begin{equation} \label{eq 5}
  x\cdot yz = z\cdot (y\backslash x) \,\, \textrm{or, after changing of variables $x\leftrightarrow  z$ we have}\,\, x\cdot (y\backslash z) = z \cdot (y  x).
  \end{equation}
  If in identity (\ref{eq 5}) we put $x=y$, $z\rightarrow x$,  and use the identity (\ref{(1)}), the we have
  \begin{equation} \label{eq 7}
   y\cdot (y\backslash x) \overset{(\ref{(1)})}{=}  x = x \cdot (y  y).
  \end{equation}
  From the identity (\ref{eq 7}) we have that the element $e = yy$ is the right unit element in quasigroup $(Q, \cdot)$.

  If we put in the identity (\ref{Sweizer}) $z=e$, then we have
  \begin{equation} \label{eq 9}
  y\cdot yx = x.
  \end{equation}
  If in the identity (\ref{eq 9}) we substitute the left side of the  identity (\ref{Sweizer}) in the form $zx\cdot zy = yx$, the we obtain Neumann identity in the form $x \cdot (zy \cdot zx) = y$.

  Therefore, any Neumann quasigroup is a Sweizer quasigroup.
  \end{proof}

\begin{corollary}
 \label{Sweizerquas}
Any Sweizer quasigroup $(Q, \cdot)$ is isotope of an abelian group $(Q, +)$ of the form $x\cdot y = x - y$.
\end{corollary}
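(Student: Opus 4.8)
The plan is to obtain this corollary as an immediate consequence of the two main structural results already in hand, namely Theorem \ref{Swezer_neumav} and Theorem \ref{Neyman_quas}. Since the representation $x \cdot y = x - y$ has already been secured for Neumann quasigroups, the only work remaining is to transport a Schweizer quasigroup into the Neumann setting and then quote that representation.

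First I would invoke Theorem \ref{Swezer_neumav}, which establishes the equivalence of the two classes: any Schweizer quasigroup $(Q, \cdot)$ is a Neumann quasigroup (indeed, the forward implication $\Longrightarrow$ of that theorem already verifies that the Schweizer identity (\ref{Sweizer}) holds for the operation $x \cdot y = x - y$, and the converse $\Longleftarrow$ shows the identities are interchangeable). Having reclassified $(Q, \cdot)$ as a Neumann quasigroup, I would then apply Theorem \ref{Neyman_quas} directly to conclude that $(Q, \cdot)$ is an isotope of some abelian group $(Q, +)$ with $x \cdot y = x - y$. Chaining these two references closes the argument in a single line.

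There is no genuine obstacle here: the substance of the result was carried by Theorem \ref{Swezer_neumav}, whose backward direction relied on the Prover 9 computation and the careful manipulation of identities (\ref{eq 5}), (\ref{eq 7}), and (\ref{eq 9}). This corollary merely harvests that equivalence. If anything, the only point worth a moment's care is to make explicit that the abelian group $(Q, +)$ supplied by Theorem \ref{Neyman_quas} is defined on the same underlying set $Q$, so that the presentation $x \cdot y = x - y$ is literally the given Schweizer operation and not a mere abstract isotope. Thus the proof I would write reduces to: \emph{By Theorem \ref{Swezer_neumav}, any Schweizer quasigroup $(Q, \cdot)$ is a Neumann quasigroup; the assertion then follows from Theorem \ref{Neyman_quas}.}
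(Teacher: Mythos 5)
Your proposal is correct and coincides with the paper's own proof, which likewise derives the corollary immediately from Theorems \ref{Swezer_neumav} and \ref{Neyman_quas}. Your additional remark that the direction actually needed here is Schweizer $\Longrightarrow$ Neumann (the part of Theorem \ref{Swezer_neumav} proved via the identities (\ref{eq 5}), (\ref{eq 7}), (\ref{eq 9})) is a helpful clarification, but it does not change the argument.
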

\begin{proof}
The proof follows from  Theorems \ref{Swezer_neumav} and \ref{Neyman_quas}.
\end{proof}

\section{G-property of Neumann   quasigroups}

We start from the following definition.

\begin{definition} \label{Pseudo_autom_Autot}
 A  bijection $\alpha$ of a set $Q$ is called a right A-pseudo\-automor\-phism  of a quasigroup $(Q, \cdot)$, if there exists a bijection $\beta$ of the set $Q$ such that the triple $(\alpha, \beta, \beta)$ is an autotopy of quasigroup $(Q, \cdot)$.

\smallskip

 A  bijection $\beta$ of a set $Q$ is called a left  A-pseudo\-automorphism  of a quasigroup $(Q, \cdot)$, if there exists a bijection $\alpha$ of the set $Q$ such that the triple $(\alpha, \beta, \alpha)$ is an autotopy of quasigroup $(Q, \cdot)$ \cite[Definition 1.159]{2017_Scerb}.
\end{definition}

We shall denote the above listed  components of autotopies  using the letter $\Pi$ with various indexes as follows: $_1\Pi^A_l$, $_2\Pi^A_l$, $_3\Pi^A_l$,  $_1\Pi^A_r$, $_2\Pi^A_r$, and $_3\Pi^A_r$. The letter $A$ in the right upper corner means that this is an autotopical pseudo\-automorphism. For example, $_2\Pi^A_r$ denotes the group of  second components of  right A-pseudo\-automorphisms of a quasigroup $(Q, \cdot)$.

Notice, in loop case concept of right   A-pseudo\-automorphism from Definition \ref{Pseudo_autom_Autot} is transformed into the concept of   left pseudoautomorphism in sense of the book of V.~D. Belousov  \cite[p. 45]{VD} and it is transformed into the concept of right pseudoautomorphism in terminology of H.~O. Pflugfelder \cite{HOP}.
For quasigroups that have left (right) identity element the similar result is proved in   \cite[Theorem 2]{DIdurik_08}.

\begin{definition}
 A quasigroup $(Q,\cdot)$ is called a right  GA-quasigroup, if the group $_2\Pi^A_r$ (or the group $_3\Pi^A_r$) is transitive on the set $Q$.

\smallskip

 A quasigroup $(Q,\cdot)$ is called a left GA-quasigroup, if the group $_1\Pi^A_l$ (or the group  $_3\Pi^A_l$) is transitive on the set $Q$.

\smallskip

 A right and left  GA-quasigroup is called GA-quasigroup.
\end{definition}

\begin{lemma} \label{G_A_QuasneuNeumann}
Case 1. Autotopy of Neumann  quasigroup $(Q, \cdot)$ is a right  A-pseudo\-automorphism if and only if
\begin{equation} \label{right_Pseudo}
a = -2b
\end{equation}
for some  fixed $a, b \in Q$.

Case 2. Autotopy of Neumann  quasigroup $(Q, \cdot)$  is a left   A-pseudo\-automorphism if and only if
the following equality is true:
\begin{equation} \label{Vid_Of_AutotopyNeu}
b=0
\end{equation}
for   fixed $b \in Q$.
\end{lemma}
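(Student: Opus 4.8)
The plan is to read everything off the explicit description of autotopies supplied by Theorem~\ref{AutotopyNeumann}. By that theorem, every autotopy of the Neumann quasigroup $(Q, \cdot)$ has the form $(L^+_a, L^+_{-b}, L^+_{a+b})\theta = (L^+_a\theta,\, L^+_{-b}\theta,\, L^+_{a+b}\theta)$, with $a, b \in Q$ and $\theta \in Aut(Q, +)$. By Definition~\ref{Pseudo_autom_Autot}, an autotopy $(\gamma_1, \gamma_2, \gamma_3)$ is a right A-pseudoautomorphism exactly when $\gamma_2 = \gamma_3$, and a left A-pseudoautomorphism exactly when $\gamma_1 = \gamma_3$. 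So each case of the lemma reduces to imposing one such equality and reading off the resulting constraint on $a$ and $b$.

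For Case~1 I would set the second and third components equal: $L^+_{-b}\theta = L^+_{a+b}\theta$. Since $\theta$ is a bijection, cancelling it on the right yields $L^+_{-b} = L^+_{a+b}$, hence $-b = a+b$ in the abelian group $(Q, +)$, which is precisely $a = -2b$, the equality~\eqref{right_Pseudo}. Conversely, if $a = -2b$ then the two components coincide and the triple is, by definition, a right A-pseudoautomorphism, so the equivalence is an ``if and only if''.

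For Case~2 I would instead set the first and third components equal: $L^+_a\theta = L^+_{a+b}\theta$. Cancelling $\theta$ gives $L^+_a = L^+_{a+b}$, so $a = a+b$, i.e.\ $b = 0$, which is~\eqref{Vid_Of_AutotopyNeu}; again the reverse implication is immediate from the definition.

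I do not expect a genuine obstacle: the entire computational content has already been absorbed into Theorem~\ref{AutotopyNeumann}, and the lemma is a direct comparison of two components of a standard-form autotopy. The only point deserving a line of justification is the cancellation of $\theta$ and the passage from equality of left translations to equality of the underlying elements, both of which are routine since $\theta$ is invertible and $L^+_u = L^+_v$ forces $u = v$ in a group.
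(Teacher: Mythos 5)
Your proof is correct and takes exactly the paper's approach: the paper's own proof is just the one-line remark that the lemma follows from Theorem~\ref{AutotopyNeumann}, and your component-by-component comparison (setting $\gamma_2=\gamma_3$ to get $-b=a+b$, i.e.\ $a=-2b$, and $\gamma_1=\gamma_3$ to get $a=a+b$, i.e.\ $b=0$) is precisely the computation the paper leaves implicit. Your extra care about cancelling $\theta$ and passing from $L^+_u=L^+_v$ to $u=v$ is the right level of detail and is consistent with the forms $(L^+_{-2b}, L^+_{-b}, L^+_{-b})\theta$ and $(L^+_a, \varepsilon, L^+_{a})\theta$ used later in Theorem~\ref{Prop_223}.
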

\begin{proof}
The proof follows from   Theorem  \ref{AutotopyNeumann}.
\end{proof}

\begin{theorem} \label{Prop_223}
Any Neumann  quasigroup $(Q, \cdot)$ is a GA-quasigroup.
\end{theorem}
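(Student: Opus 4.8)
The plan is to combine the normal form for autotopies from Theorem~\ref{AutotopyNeumann} with the two characterizations in Lemma~\ref{G_A_QuasneuNeumann}, and then to deduce transitivity of the relevant component groups from the (simple) transitivity of the left translations of the underlying abelian group $(Q,+)$. Throughout I write every autotopy as $(L^+_a, L^+_{-b}, L^+_{a+b})\theta$ with $a,b\in Q$ and $\theta\in Aut(Q,+)$, as permitted by Theorem~\ref{AutotopyNeumann}.

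First I would establish the right GA-property by looking at $_3\Pi^A_r$. By Case~1 of Lemma~\ref{G_A_QuasneuNeumann}, an autotopy $(L^+_a, L^+_{-b}, L^+_{a+b})\theta$ is a right A-pseudoautomorphism exactly when $a=-2b$; imposing this, the third component is $L^+_{a+b}\theta = L^+_{-b}\theta$ (which indeed coincides with the second component, as it must for a triple of the form $(\alpha,\beta,\beta)$). Hence $_3\Pi^A_r$ consists of all maps $L^+_{-b}\theta$ with $b\in Q$ and $\theta\in Aut(Q,+)$. Specializing to $\theta=\varepsilon$ already produces every left translation $L^+_{c}$ of $(Q,+)$, with $c=-b$, and these act transitively on $Q$; therefore $_3\Pi^A_r$ is transitive and $(Q,\cdot)$ is a right GA-quasigroup.

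The left GA-property is entirely parallel, now using $_1\Pi^A_l$. By Case~2 of Lemma~\ref{G_A_QuasneuNeumann}, the autotopy $(L^+_a, L^+_{-b}, L^+_{a+b})\theta$ is a left A-pseudoautomorphism exactly when $b=0$, in which case the first and third components both equal $L^+_a\theta$ (as required for a triple $(\alpha,\beta,\alpha)$). Thus $_1\Pi^A_l$ is the set of all $L^+_a\theta$ with $a\in Q$, $\theta\in Aut(Q,+)$; taking $\theta=\varepsilon$ gives every left translation $L^+_a$, so $_1\Pi^A_l$ is transitive and $(Q,\cdot)$ is a left GA-quasigroup. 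Being simultaneously right and left GA, the quasigroup $(Q,\cdot)$ is a GA-quasigroup.

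I do not anticipate a genuine obstacle: the whole argument rests on Lemma~\ref{G_A_QuasneuNeumann}, which reduces each case to a single linear constraint ($a=-2b$ or $b=0$), after which the translation part of the autotopy already sweeps out all of $Q$. The only point worth stating with care is that the automorphism factor $\theta$ is not needed for transitivity — the choice $\theta=\varepsilon$ suffices — so the argument never has to control how $Aut(Q,+)$ acts, and transitivity follows from the group of translations alone.
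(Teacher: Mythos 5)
Your proposal is correct and follows essentially the same route as the paper: both apply Lemma~\ref{G_A_QuasneuNeumann} to Theorem~\ref{AutotopyNeumann} to obtain the normal forms $(L^+_{-2b}, L^+_{-b}, L^+_{-b})\theta$ and $(L^+_a, \varepsilon, L^+_a)\theta$ for right and left A-pseudoautomorphisms, and then conclude transitivity of the relevant component groups from the translations of $(Q,+)$. Your write-up merely makes explicit (via the choice $\theta=\varepsilon$) the transitivity argument that the paper leaves implicit, and uses $_1\Pi^A_l$ in place of $_3\Pi^A_l$, which is immaterial since with $b=0$ the first and third components coincide.
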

\begin{proof}
From Theorem \ref{AutotopyNeumann} and  Lemma \ref{G_A_QuasneuNeumann} we have:
any right  A-pseudo\-automorphism of Neumann quasigroup $(Q, \cdot)$  has the form
\begin{equation} \label{Autotopy_formpseudorigt}
(L^+_{-2b}, L^+_{-b}, L^+_{-b})\theta
\end{equation}
for all elements $b\in Q$, where $L^+_{-2b},  L^+_{-b}$ are translations of the group $(Q, +)$;

any left  A-pseudo\-automorphism has the form
\begin{equation} \label{Autotopy_formpseudoleft}
(L^+_a, \varepsilon, L^+_{a})\theta
\end{equation}
for all elements $a\in Q$, where $L^+_{a}$ is a translation of the group $(Q, +)$.

From equalities (\ref{Autotopy_formpseudorigt}) and (\ref{Autotopy_formpseudoleft}) it follows that the groups
$_3\Pi^A_r$ and $_3\Pi^A_l$ act transitively on the set $Q$. The last means that any Neumann  quasigroup $(Q, \cdot)$ is a GA-quasigroup.
\end{proof}

Notice, Neumann quasigroup is left   G-quasigroup \cite[Theorem 3]{DIdurik_08} in sense of definition of V.~D. Belousov \cite{VD}.
From the last result it follows   that Neumann quasigroup is  GA-quasigroup which is not right G-quasigroup in sense of V.~D. Belousov.

\noindent {{\sc N. N. Didurik} \\
{State University \lq\lq Dimitrie Cantemir\rq\rq }    \\
{Academiei str. 3/2, MD--2028 Chi\c sin\u au} \\
{Moldova}  \\
E-mail:  \emph{natnikkr83@mail.ru}        

 \noindent {\sc V. A. Shcherbacov}\\
  Institute of Matematics and Computer Science\\
   {Academiei str. 5, MD--2028 Chi\c sin\u au}\\
  {Moldova}\\
  E-mail:   \emph{victor.scerbacov@math.md}          

}

\end{document}